\documentclass[11pt,reqno]{amsart}
\usepackage[T1]{fontenc}
\usepackage{lmodern}
\usepackage{amsmath,amssymb,amsthm,mathtools}
\usepackage[margin=1.15in]{geometry}
\usepackage[protrusion=true,expansion=false]{microtype}
\usepackage[hidelinks]{hyperref}
\makeatletter
\def\ps@headings{\ps@empty
  \def\@evenhead{%
    \setTrue{runhead}%
    \normalfont\fontsize{8}{9.5}\selectfont
    \thepage\enspace\hfill
    \def\thanks{\protect\thanks@warning}%
    \leftmark{}{}}%
  \def\@oddhead{%
    \setTrue{runhead}%
    \normalfont\fontsize{8}{9.5}\selectfont
    \def\thanks{\protect\thanks@warning}%
    \rightmark{}{}\hfill\enspace\thepage}%
  \let\@mkboth\markboth
}
\makeatother
\numberwithin{equation}{section}
\newtheorem{theorem}{Theorem}[section]
\newtheorem{proposition}[theorem]{Proposition}
\newtheorem{lemma}[theorem]{Lemma}

\newcommand{\C}{\mathbb C}

\title[Griffiths positivity does not imply positivity of the top Chern form]
 {Griffiths positivity does not imply positivity of the top Chern form}
\author{Yun-Heng Du}
\address{Academy of Mathematics and Systems Science, Chinese Academy of Sciences, Beijing 100190, China}
\email{duyunheng22@mails.ucas.ac.cn}
\subjclass[2020]{15A15, 32L05, 47B65, 53C05, 53C55}
\keywords{Griffiths positivity, Chern--Weil forms, Schur forms,
 positive linear maps, apolarity, complex Gaussian moments}
\date{}
\hypersetup{pdftitle={Griffiths positivity does not imply positivity of the top Chern form},
 pdfauthor={Yun-Heng Du}}
\begin{document}
\raggedbottom
\begin{abstract}
For every integer $r\geq9$, we construct a smooth Griffiths-positive
Hermitian metric on $\mathcal O_{\mathbb P^r}(1)^{\oplus r}$ whose top
Chern form is pointwise negative on a nonempty open set. This gives
counterexamples on compact projective manifolds to Griffiths' conjecture
on the positivity of Chern--Weil forms.
\end{abstract}
\maketitle
\markboth{\shorttitle}{\shorttitle}
\section{Introduction}

\subsection{Chern--Weil positivity and the main theorem}

Let $X$ be a complex manifold of dimension $n$, and let $(E,h)$ be a
Hermitian holomorphic vector bundle of rank $r$ on $X$. The metric $h$ is
Griffiths positive if the curvature $R_h$ of its Chern connection
satisfies
\[
 \langle R_h(u,\bar u)v,v\rangle_h>0
\]
for every $x\in X$ and all nonzero $u\in T_x^{1,0}X$ and $v\in E_x$.
We normalize the Chern forms by
\[
 \det\left(I_r+s\frac{i}{2\pi}R_h\right)
   =\sum_{q=0}^r s^q c_q(E,h).
\]
Assign weight $q$ to $c_q$. Fulton and Lazarsfeld
\cite{FultonLazarsfeld1983} identified the cone of homogeneous positive
polynomials considered by Griffiths with the nonzero polynomials
\[
 P=\sum_{\substack{\lambda\vdash k\\\ell(\lambda)\leq r}}
 a_\lambda s_\lambda,
 \qquad a_\lambda\geq0,
\]
where $s_\lambda$ is the Schur polynomial in the $r$ formal Chern roots,
$\ell(\lambda)$ is the number of parts of $\lambda$, and the coefficients
are not all zero. We use the convention in which $c_j$ is the $j$th
elementary symmetric polynomial in these roots; thus $s_{(1^r)}=c_r$.

In 1969, Griffiths \cite[p.~247]{Griffiths1969} proposed what is now
called Griffiths' conjecture on the positivity of Chern--Weil forms. It
asserts that, if $X$ is a compact complex manifold of dimension $n$ and
$(E,h)$ is a Griffiths-positive Hermitian holomorphic vector bundle of
rank $r$ on $X$, then for every integer $k$ with $1\leq k\leq n$ and
every nonzero positive polynomial $P\in\mathbb Q[c_1,\ldots,c_r]$ of
weighted degree $k$, the Chern--Weil form
\[
 P\bigl(c_1(E,h),\ldots,c_r(E,h)\bigr)
\]
is Hermitian positive at every point of $X$. Here a $(k,k)$-form is
Hermitian positive if at each point it can be written as
$\sum_j i^{k^2}\xi_j\wedge\overline{\xi_j}$ for $(k,0)$-covectors $\xi_j$.
Equivalently, the positive cone spanned by the Schur forms of $(E,h)$
consists of Hermitian positive differential forms, with Hermitian
positivity understood as in Fagioli
\cite[Remark~1.4 and Proposition~1.7]{Fagioli2022}.

The theorem below gives a counterexample with $X=\mathbb P^r$,
$k=n=r$, and $P=c_r$. Here $c_r$ is a Schur polynomial, and all standard
positivity cones coincide in top degree; the pointwise negativity in the
theorem therefore contradicts the conjectured Hermitian positivity.

\begin{theorem}\label{cor:ext-geometric}
For every integer $r\geq9$, the holomorphic vector bundle
$E_r\coloneqq\mathcal O_{\mathbb P^r}(1)^{\oplus r}$ admits a smooth Hermitian
metric $h_r$ which is Griffiths positive on $\mathbb P^r$ and whose top
Chern form $c_r(E_r,h_r)$ is pointwise negative on a nonempty open subset
of $\mathbb P^r$.
\end{theorem}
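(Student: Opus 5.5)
The plan is to separate the statement into a pointwise linear-algebra problem and a globalization step, and to treat the pointwise problem as the heart of the matter.

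\textbf{Pointwise problem.} At a point, the curvature of the Chern connection on a rank $r$ bundle over an $r$-dimensional base is a Hermitian form
\[
\Theta=\sum_{a,b,j,k}\Theta_{a\bar b j\bar k}\,dz_a\wedge d\bar z_b\otimes e_j^*\otimes\bar e_k^* .
\]
Griffiths positivity says the biquadratic form $\Theta(u,\bar u,v,\bar v)$ is positive for all nonzero $u,v\in\C^r$. Equivalently, the map $\Phi\colon M_r(\C)\to M_r(\C)$ attached to $\Theta$ is a strictly positive linear map. The top Chern form $c_r$ is, up to a positive constant times the volume form, a mixed-discriminant type quantity. Concretely, it is the coefficient of $dz_1\wedge d\bar z_1\wedge\cdots$ in $\det(\Theta)$, which one can write as
\[
\sum_{\sigma,\tau\in S_r}\operatorname{sgn}(\sigma\tau)\prod_a\Theta_{a\bar a\,\sigma(a)\overline{\tau(a)}}
\]
after symmetrizing in the base indices.

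Using complex Gaussian moments, I would rewrite this as an expectation. With $u$ a standard complex Gaussian vector, the quantity becomes $\mathbb E_u$ of a determinant-like polynomial in the values $\Theta(u,\bar u,\cdot,\cdot)$. This becomes an apolarity pairing between $\Theta$ viewed as a biquadratic form and the polynomial $\det$. So the task is: find a strictly positive biquadratic form whose apolar pairing with this determinant polynomial is negative.

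Decomposable positive maps, i.e.\ sums of squares $|\langle Au,\bar v\rangle|^2$, always give $c_r\geq0$. This follows from the Bloch--Gieseker / Fulton--Lazarsfeld mechanism, since such a form is a quotient of a trivially Nakano-positive form. So the example must come from a non-decomposable positive map. I would try a Choi-type or Robinson-type family: $\Theta_t=\Theta_{\mathrm{Choi}}+t\,(\text{identity form})$, perturbed so that it is strictly positive. I would then compute the pairing as an explicit polynomial in $r$ and $t$, and show it becomes negative once $r\geq9$. The threshold $9$ should come out of comparing a combinatorial count, something like $r!$ against a sum over permutation cycle types, in the Gaussian moment expansion. \textbf{This is the main obstacle.} It requires choosing the right symmetric family, e.g.\ one invariant under a large group such as $U(1)^r\rtimes S_r$ so the determinant reduces to a manageable sum, and then doing the sign computation.

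\textbf{Globalization.} Once a strictly Griffiths positive $\Theta_0$ with negative $c_r$ is found, I realize it at a point $p\in\mathbb P^r$ and glue it to the Fubini--Study metric $h_{FS}^{\oplus r}$.

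Locally, in affine coordinates centered at $p$, the metric $h_{\mathrm{loc}}=I-\Theta_0(z,\bar z)$ has curvature $\Theta_0$ at $0$. By strictness and continuity, it is Griffiths positive on a small ball $B_\delta$ and $c_r<0$ there.

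For the gluing I would work with dual metrics, where Griffiths positivity of $h$ means Griffiths negativity of $h^*$. That is, $\log|\xi|^2_{h^*}$ is plurisubharmonic on the total space of $E^*$ minus the zero section. This property is preserved under sums of metrics and under multiplication by $e^{\varphi}$ with $\varphi$ plurisubharmonic on the base. I set
\[
h^*=\epsilon\,h_{FS}^*+e^{M\rho}\chi\,h_{\mathrm{loc}}^*,
\]
where $\rho$ is strictly plurisubharmonic near $p$ with $\rho(p)=0$ and $\rho<-1$ on $\partial B_{\delta/2}$, and $\chi$ is a cutoff. For $M$ large, the cutoff errors are multiplied by $e^{-M}$ and are absorbed by the strict negativity margin of $\epsilon h_{FS}^*$. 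Near $p$, the $e^{M\rho}$ term dominates, so the curvature at $p$ is $\Theta_0$ plus a positive multiple of $i\partial\bar\partial\rho\otimes I$. I take $\rho$ with tiny Hessian, for instance $\rho=\eta|z|^2-C$ suitably adjusted, so that the negativity of $c_r$ survives. Quantitatively balancing $\epsilon$, $M$ and $\eta$ is routine but must be checked. The resulting $h_r=(h^*)^{-1}$ is smooth and Griffiths positive on all of $\mathbb P^r$, with $c_r(E_r,h_r)<0$ near $p$, which proves Theorem~\ref{cor:ext-geometric}.
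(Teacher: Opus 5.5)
\textbf{Main gap: the pointwise example is never constructed.} The theorem reduces to exhibiting a strictly positive map $\Lambda:M_r\to M_r$ with $F_r(\Lambda)=\det(\partial_Y)\det\Lambda(Y)<0$, and you leave exactly this open as ``the main obstacle''. What you offer in its place is a search heuristic, not a construction. The Choi map itself lives on $M_3$, where every positive map has $F_3\geq0$ (Wan), and you do not specify any higher-rank family or carry out any computation. There is also no reason to expect a sign change exactly at $r=9$ from a count such as $r!$ versus cycle types. In the paper, $9=6+3$ is an artifact of the construction, and the minimal rank is left open.

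Your coefficient formula is also wrong. Keeping only base indices $a\bar a$ retains just the $\rho=\eta$ terms of the correct three-permutation sum $\sum_{\sigma,\rho,\eta}\operatorname{sgn}(\sigma\rho\eta)\prod_a\Phi(E_{\rho(a)\eta(a)})_{a\sigma(a)}$. Likewise, the Gaussian representation needs an $n\times n$ Gaussian matrix, $F_n(\Phi)=\mathbb E[\overline{\det G}\det\Phi(G)]$, not a Gaussian vector: for $\Phi=\mathrm{id}$, $\det(uu^*)=0$ while $F_n=n!$. Bloch--Gieseker and Fulton--Lazarsfeld concern classes and numbers, not pointwise forms, so they do not justify your remark about decomposable maps.

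The missing idea is the paper's explicit map
$\Phi_t(\mathcal Y)=\bigl(\operatorname{diag}(Z,Z^{\mathsf T})+tB_6(S)\bigr)\oplus(X+Y^{\mathsf T})$, where $B_6$ is the Breuer--Hall map; it is positive by Hall's theorem. With $A=X+Y^{\mathsf T}$, its determinant equals $(\det A)\bigl[\det(Z+tR(A))+t^2k^{\mathsf T}(Z+tR(A))l\bigr]^2$. Since $F_3(R)=0$, the terms without the skew blocks vanish. The skew blocks contribute $32t^3\,\mathbb E\bigl[|\det G|^2(|\operatorname{tr}G|^2-\|G\|_{\mathrm F}^2)\bigr]=32t^3(24-72)$, so $F_9(\Phi_t)=-1536t^3$. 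All $r\geq9$ then follow without a family computation: adjoin scalar diagonal entries $y_{jj}$ for $j>9$, which leave $F$ unchanged, and add $\eta\operatorname{tr}(Y)I_r$ with $\eta$ small to get strict positivity.

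\textbf{Second gap: the gluing does not work as set up.} You ask for $\rho(p)=0$, $\rho<-1$ on $\partial B_{\delta/2}$, and $\rho$ plurisubharmonic. On the ball the maximum principle forbids this, and your own example $\eta\|z\|^2-C$ increases outward. If $\rho$ is psh only near $p$, then wherever it decreases outward, $Mi\partial\bar\partial\rho$ acquires negative eigenvalues of size about $M$ times the rate of decrease. For radial $\rho$ these lie in the directions tangent to the spheres, where the rank-one cross term in $i\partial\bar\partial\log(A+B)$ gives essentially no help. This happens while $e^{M\rho}$ is still of order one, so the negativity must be absorbed by the fixed Griffiths margins. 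Taking $M$ large makes this worse, not better, so the balancing is not routine. A psh weight can only grow outward, so it would have to sit on the Fubini--Study summand instead.

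The paper avoids weights entirely. The metrics $h_\Lambda$ and $h_0=I_q/(1+s)$, with $s=\|z\|^2$, agree to second order at $p$, so $H=h_\Lambda-h_0$ satisfies $\|H\|\leq C's$ and $\|H_u\|\leq C'\sqrt s\,\|u\|$. Take $h=(1-\chi_L)h_0+\chi_Lh_\Lambda$ with a cutoff $\chi_L$ logarithmic in $s$. The error terms are then $O(L^{-1})\|u\|^2$, absorbed by the uniform bound $-(h_t)_{u\bar u}\geq c\|u\|^2$ along the segment $h_t$. Since $h=h_\Lambda$ near $p$, the curvature there is exactly $\Lambda$, with no perturbation to control.
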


Thus Griffiths' conjecture on the positivity of Chern--Weil forms fails
on compact projective manifolds, even for a direct sum of an ample line
bundle. The smallest rank for which the top Chern form of a
Griffiths-positive metric can fail to be weakly nonnegative remains open.

Griffiths \cite{Griffiths1969} proved positivity
of the second Chern form in rank two. Fagioli \cite{Fagioli2022}
established weak nonnegativity of the second Chern form in arbitrary rank
under Griffiths semipositivity and of $c_1c_2-c_3$ in rank three. Wan
\cite{Wan2026} proved weak positivity of the third Chern form in arbitrary
rank and positivity of all Schur forms for rank-three Griffiths-positive
bundles over threefolds. Guler \cite{Guler2012} proved positivity of the
signed Segre forms for Griffiths-positive bundles on projective
manifolds. Together, these results give affirmative cases in each of the
first three Chern degrees;
the counterexample below arises from a rank-nine construction.

Stronger curvature hypotheses also ensure positivity of characteristic
forms. The curvature condition now called Bott--Chern nonnegativity gives
nonnegative Chern forms by Bott and Chern \cite{BottChern1965}; Li
\cite{Li2021} extended the conclusion to Schur forms for Bott--Chern
nonnegative metrics on compact complex manifolds. Finski
\cite{Finski2022} proved positivity of all Schur forms under Nakano or
dual Nakano positivity. Wan \cite{Wan2024} obtained weak positivity under
strongly decomposable positivity of type I and positivity under strongly
decomposable positivity of type II. Thus the failure in
Theorem~\ref{cor:ext-geometric} already occurs under Griffiths positivity.

The numerical theory concerns Chern numbers and cohomology classes on
projective varieties. Bloch and Gieseker \cite{BlochGieseker1971}
established numerical positivity of the Chern classes of ample vector
bundles on projective varieties. Fulton and Lazarsfeld
\cite{FultonLazarsfeld1983} proved that the positive polynomials described
above are precisely the homogeneous numerically positive polynomials
for ample vector bundles. The theorem below concerns the sign of a
Chern--Weil representative at each point, for a specified metric.

\subsection{Reformulation and construction strategy}

The pointwise curvature problem has a matrix formulation in terms of
positive linear maps. For $n\geq1$, write $M_n=M_n(\mathbb C)$.
Transpose and conjugate transpose are denoted by $\mathsf T$ and $*$,
respectively. A complex-linear map $\Phi:M_n\to M_q$ is positive
(respectively, strictly positive) if $\Phi(H)\geq0$ for every $H\geq0$
(respectively, $\Phi(H)>0$ for every nonzero $H\geq0$). By spectral
decomposition, positivity is equivalent to the rank-one test
\begin{equation}\label{eq:intro-rank-one-test}
 v^*\Phi(uu^*)v\geq0
 \qquad (u\in\mathbb C^n,\ v\in\mathbb C^q).
\end{equation}
Finski \cite[Theorem~1.3]{Finski2022} reduced the universal problem of
weak positivity of Schur forms to a finite-dimensional problem for positive
linear maps. When the source and target matrix spaces are both $M_n$ and
one considers the top Chern form in complex dimension $n$, the scalar
takes the following form. For a matrix variable
$Q=(q_{ij})\in M_{r,s}(\mathbb C)$, whose entries are independent complex
variables, write
\[
 \partial_Q\coloneqq
 \left(\frac{\partial}{\partial q_{ij}}\right)_{
  1\leq i\leq r,\,1\leq j\leq s}.
\]
For a square matrix variable, $\det(\partial_Q)$ is the determinant of
this matrix of commuting differential operators.
With $Y=(y_{ij})$, the scalar associated with a complex-linear
endomorphism $\Phi$ of $M_n$ is
\begin{equation}\label{eq:F-definition}
 F_n(\Phi)\coloneqq\det(\partial_Y)\det\Phi(Y).
\end{equation}
For a Hermiticity-preserving map $\Phi:M_n\to M_n$, consider near the
origin the Hermitian metric
\[
 h_\Phi(z)\coloneqq I_n-2\pi\Phi(zz^*).
\]
At the origin,
$\langle R^{h_\Phi}(u,\bar u)v,v\rangle_{h_\Phi}|_0
=2\pi v^*\Phi(uu^*)v$ for $u,v\in\mathbb C^n$, and its top Chern form is
\begin{equation}\label{eq:intro-top-chern-test}
 c_n(\mathbb C^n\times\mathbb C^n,h_\Phi)\big|_0
 =F_n(\Phi)\,dV_n\big|_0.
\end{equation}
Here $dV_n=\bigwedge_{j=1}^n(i\,dz_j\wedge d\bar z_j)$.
Thus the geometric problem leads to the construction of a strictly
positive map for which $F_n$ is negative.

The same formulation contains the known two- and three-dimensional cases. Finski
\cite[Proposition~1.4(c)]{Finski2022} proved the required nonnegativity
when $n=2$. Wan \cite[Theorem~0.3]{Wan2026} proved it when $n=3$.
His argument first uses operator scaling to pass, without changing the
sign of $F_3$, to maps that are unital and trace preserving
\cite[pp.~8--9]{Wan2026}. It then expresses this
scalar through spherical moment identities involving traces of products
of matrices and applies a symmetric spectral inequality
\cite[Lemmas~2.6 and~2.9 and Theorem~2.11]{Wan2026}. In dimension four,
the analogous expansion contains an additional sum of traces indexed by
the $4$-cycles, which complicates the extension of this spectral argument
\cite[Remark~2.12]{Wan2026}.

After constructing a rank-two ample bundle with no smooth
Griffiths-semipositive metric \cite{DuXie2026}, Du and Xie developed a
measure-theoretic criterion for Griffiths-positive metrics on smooth
projective varieties in
\cite{DuXieFirstJet2026}. Condition~(iv) characterizes the existence of a
Griffiths-positive metric by the absence of a nonzero positive
matrix-valued measure annihilating the relevant Levi forms. Its
finite-dimensional version produces, when separation fails, an atomic
probability measure under which matrices formed from first jets have zero
average \cite[Proposition~2.1]{DuXieFirstJet2026}. This formulation led
the author to study the present positivity problem through expectations
of matrix-valued data. Hoeffding's distribution comparisons for sums of independent Bernoulli
variables \cite{Hoeffding1956}, Darroch's bound relating their mean and
mode \cite{Darroch1964}, and Gnedin's extension of the mean--mode relation
\cite[Theorem~2]{Gnedin2024} provided initial models for extracting sign
information from expectations. The formula for $F_n(\Phi)$ called for a
matrix distribution under which the two determinant expansions, with
their row and column permutations and signs, can be evaluated together.

Complex Gaussian matrices supply such a law. If $G_n=(g_{ij})$ is an
$n\times n$ matrix of independent standard complex Gaussian entries,
normalized by $\mathbb E|g_{ij}|^2=1$, then
\begin{equation}\label{eq:gaussian-F}
 F_n(\Phi)
 =\mathbb E\bigl[\overline{\det G_n}\det\Phi(G_n)\bigr].
\end{equation}
This identity is a specialization of Bargmann's classical Gaussian
integral representation of the apolar inner product \cite{Bargmann1961};
Aldaz, Bravo, and Render \cite[Theorem~7]{AldazBravoRender2025} give a
modern polynomial formulation.
Independence and the orthogonality of complex Gaussian monomials pair
the coefficients of the two determinants in \eqref{eq:gaussian-F}.
Gaussian integration by parts then transfers conjugate factors to
derivatives, allowing the terms selected by the construction to be
expressed as weighted moments. Lemma~\ref{prob:gaussian-apolarity}
establishes these identities; the integration-by-parts formula is the
complex polynomial counterpart of Stein's Gaussian identity
\cite[Lemma~1]{Stein1981}.

Wan's rank-three argument provides a closely related use of moment identities
\cite[Lemmas~2.6 and~2.9]{Wan2026}. In rank three, the trace terms
produced by the cycle expansion combine into elementary symmetric
polynomials of the positive matrix associated with each sphere vector,
so a spectral inequality proves nonnegativity. Our use of the Gaussian
pairing in \eqref{eq:gaussian-F} retains the simultaneous row and column
correlations in the two determinant expansions. Orthogonality and
integration by parts express their contribution through weighted moments,
which the Cayley identities evaluate; these moments are not controlled by
the rank-one positivity tests, and for a suitable map the rank-nine top
Chern coefficient is negative.

Equation~\eqref{eq:gaussian-F} characterizes the difference between the
two positivity properties at the level of one curvature tensor. Griffiths
semipositivity is nonnegativity of the rank-one quadratic tests in
\eqref{eq:intro-rank-one-test} on every decomposable tangent--fiber
direction. Positivity of the top Chern form is positivity of the
scalar $F_n(\Phi)$ in \eqref{eq:intro-top-chern-test}, and
\eqref{eq:gaussian-F} writes this scalar as
$\mathbb E[\overline{\det G_n}\det\Phi(G_n)]$. Thus positivity of the map
controls $\Phi$ on rank-one positive semidefinite matrices, whereas the
sign of the top Chern form depends on the signed sum obtained by expanding
both determinants. The expectation identifies the monomials in
$\det\Phi(Y)$ that contribute to $F_n(\Phi)$. The construction makes their
total contribution negative while preserving the rank-one inequalities in
\eqref{eq:intro-rank-one-test}. Orthogonality and integration by parts
express this contribution through weighted moments, which the Cayley
identities evaluate.

\subsection{Construction of the counterexample}

The proof of Theorem~\ref{cor:ext-geometric} starts with a positive map
for which $F_9<0$. The Gaussian representation identifies the terms
responsible for the negative sign.
A small trace perturbation makes the map strictly positive while
preserving this sign. We then realize it as the curvature of a local
metric and extend the metric to projective space, preserving its
curvature near the origin and Griffiths positivity everywhere.

The positive three-dimensional reduction map
\[
 R:M_3\to M_3,\qquad R(A)\coloneqq\operatorname{tr}(A)I_3-A,
\]
will be used below, and $F_3(R)=0$.
The six-dimensional Breuer--Hall map of Breuer \cite{Breuer2006} and Hall
\cite[Theorem~4]{Hall2006} uses two copies of $R$ and skew-symmetric
off-diagonal blocks while preserving positivity. We append a
three-dimensional diagonal block whose determinant weights the
contribution of the two skew-symmetric blocks. Since $F_3(R)=0$, the terms
of $F_9$ independent of these blocks vanish, so their joint contribution
alone determines the sign of $F_9$; this is what leads from the
six-dimensional Breuer--Hall map to rank nine.

For the resulting family $\Phi_t:M_9\to M_9$, $t\geq0$, Gaussian
integration by parts expresses that contribution through
\[
 \mathbb E[|\det G|^2|\operatorname{tr}G|^2]
 -\mathbb E[|\det G|^2\|G\|_{\mathrm F}^2],
\]
where $G$ is a $3\times3$ matrix with independent standard complex
Gaussian entries and $\|G\|_{\mathrm F}^2=\sum_{i,j}|g_{ij}|^2$.
The trace involves the diagonal entries, whereas the Frobenius norm
includes every entry. The Cayley identities evaluate these weighted
moments and give
\[
 F_9(\Phi_t)=-1536t^3
 \qquad(t\geq0).
\]
Thus, for $t>0$, the map satisfies $v^*\Phi_t(uu^*)v\geq0$ for all
$u,v$ while the expectation in \eqref{eq:gaussian-F} is negative. By
\eqref{eq:gaussian-F}, its exact value is $-1536t^3$.

The same block determinant calculation extends the construction to every
rank $r\geq9$. Proposition~\ref{geo:strict-map} shows that adjoining scalar
diagonal blocks and adding a small trace term produces a strictly positive
map $\Lambda_r:M_r\to M_r$ with $F_r(\Lambda_r)<0$.

The metric $h_{\Lambda_r}$ on a neighborhood of the origin realizes the
prescribed curvature there, as in Finski \cite[Proposition~2.10]{Finski2022}. After
pairing with the metric, the curvature is the sum of positive rank-one
evaluations of the map and a positive semidefinite correction, so the
metric is Griffiths positive near the origin. The base-point
formula \eqref{eq:intro-top-chern-test} gives its top Chern form there as
$F_r(\Lambda_r)dV_r$, which is negative.

At the origin, the local metric has the same value and first derivatives
as the Fubini--Study direct sum metric on
$\mathcal O_{\mathbb P^r}(1)^{\oplus r}$, so their difference vanishes to
second order. A cutoff varying slowly on a logarithmic radial scale joins
the two metrics in one step; the resulting curvature errors are dominated
by the uniform Griffiths lower bound along the affine segment between them.
Proposition~\ref{geo:projective-realization} gives this extension for the
metric $I_q-2\pi\Lambda(zz^*)$ associated with any strictly positive
linear map $\Lambda:M_n\to M_q$.

Section~\ref{sec:construction} develops a positive-map formulation of the
curvature at a point, represents the top Chern coefficient by a Gaussian
moment, and constructs in rank nine a local Hermitian metric whose curvature
is Griffiths semipositive and whose top Chern form is negative at the origin.
Section~\ref{sec:geometric-counterexample} makes the underlying map strictly
positive in every rank $r\geq9$ and realizes its curvature by a globally
Griffiths-positive metric on projective space, producing the compact
counterexamples.

\section{Positive maps and top Chern forms}
\label{sec:construction}\label{alg:section}

In this section, we establish the pointwise part of the construction: a
Hermitian metric whose curvature is Griffiths semipositive at the origin
and whose top Chern form is negative there. This construction will be used
to obtain compact counterexamples.

For $n\geq1$, write $M_n=M_n(\mathbb C)$ for the space of $n\times n$
complex matrices, and let $\Phi:M_n\to M_n$ be a complex-linear map
satisfying $\Phi(A^*)=\Phi(A)^*$ for every $A\in M_n$. On $\mathbb C^n$, use coordinates
$z_j=x_j+iy_j$. For a Hermitian metric matrix $h$, we use the convention
\begin{equation}\label{geo:curvature-convention}
 \nabla=d+h^{-1}\partial h,\qquad
 R^h=\bar\partial(h^{-1}\partial h),\qquad
 \Omega^h=\frac{i}{2\pi}R^h.
\end{equation}
Writing $h_i\coloneqq\partial h/\partial z_i$ and similarly for the other
derivatives gives
\begin{equation}\label{geo:curvature-coefficients}
 R^h_{i\bar j}
 =-\partial_{\bar j}(h^{-1}h_i)
 =h^{-1}h_{\bar j}h^{-1}h_i-h^{-1}h_{i\bar j}.
\end{equation}

The Hermitian matrix
\begin{equation}\label{geo:quadratic-metric}
 h_\Phi(z)=I_n-2\pi\Phi(zz^*)
\end{equation}
satisfies $h_\Phi(0)=I_n$ and is therefore positive definite near the
origin. Its first derivatives satisfy $dh_\Phi(0)=0$. With $E_{ij}$ denoting
the standard matrix units in $M_n$,
its mixed second derivatives are
$(h_\Phi)_{i\bar j}=-2\pi\Phi(E_{ij})$.
Equation~\eqref{geo:curvature-coefficients} therefore gives
\begin{equation}\label{geo:quadratic-curvature}
 \Omega^{h_\Phi}|_0
 =-\frac{i}{2\pi}\sum_{i,j=1}^n
   (h_\Phi)_{i\bar j}(0)\,dz_i\wedge d\bar z_j\big|_0
 =i\sum_{i,j=1}^n\Phi(E_{ij})\,dz_i\wedge d\bar z_j\big|_0.
\end{equation}
Use the positive volume form
\begin{equation}\label{geo:volume}
 dV_n=\bigwedge_{j=1}^n(i\,dz_j\wedge d\bar z_j)
     =2^n\,dx_1\wedge dy_1\wedge\cdots\wedge dx_n\wedge dy_n.
\end{equation}
Taking the determinant in \eqref{geo:quadratic-curvature} gives the complete
expansion of the top Chern form at the origin:
\[
 \begin{aligned}
 c_n(\mathbb C^n\times\mathbb C^n,h_\Phi)|_0
 &=\sum_{\sigma\in S_n}\operatorname{sgn}(\sigma)
   \bigwedge_{a=1}^n\left(
     i\sum_{i_a,j_a=1}^n
     \Phi(E_{i_a j_a})_{a,\sigma(a)}
     dz_{i_a}\wedge d\bar z_{j_a}\right) \\
 &=\sum_{\sigma,\rho,\eta\in S_n}\operatorname{sgn}(\sigma)
   \prod_{a=1}^n\Phi(E_{\rho(a),\eta(a)})_{a,\sigma(a)}
   \bigwedge_{a=1}^n
     (i\,dz_{\rho(a)}\wedge d\bar z_{\eta(a)}) \\
 &=\sum_{\sigma,\rho,\eta\in S_n}
   \operatorname{sgn}(\sigma)\operatorname{sgn}(\rho)
   \operatorname{sgn}(\eta)
   \prod_{a=1}^n
   \Phi(E_{\rho(a),\eta(a)})_{a,\sigma(a)}\,dV_n|_0.
 \end{aligned}
\]
Now let $Y=(y_{ij})$ be an $n\times n$ matrix of independent variables and
write
\[
 \partial_Y\coloneqq
 \left(\frac{\partial}{\partial y_{ij}}\right)_{i,j=1}^n.
\]
Then
\begin{align*}
 F_n(\Phi)
 &=\det(\partial_Y)\det\Phi(Y)\\
 &=\sum_{\sigma,\tau\in S_n}
   \operatorname{sgn}(\sigma)\operatorname{sgn}(\tau)
   \left(\prod_{i=1}^n\frac{\partial}{\partial y_{i,\tau(i)}}\right)
   \prod_{a=1}^n\left(\sum_{p,q=1}^n
   y_{pq}\Phi(E_{pq})_{a,\sigma(a)}\right)\\
 &=\sum_{\sigma,\tau,\rho\in S_n}
   \operatorname{sgn}(\sigma)\operatorname{sgn}(\tau)
   \prod_{a=1}^n
   \Phi(E_{\rho(a),\tau(\rho(a))})_{a,\sigma(a)}\\
 &=\sum_{\sigma,\rho,\eta\in S_n}
   \operatorname{sgn}(\sigma)\operatorname{sgn}(\rho)
   \operatorname{sgn}(\eta)
   \prod_{a=1}^n
   \Phi(E_{\rho(a),\eta(a)})_{a,\sigma(a)}.
\end{align*}
Comparing the two expansions yields
\begin{equation}\label{geo:quadratic-chern}
 c_n(\mathbb C^n\times\mathbb C^n,h_\Phi)|_0
 =F_n(\Phi)\,dV_n|_0.
\end{equation}
We call $\Phi$ positive if $\Phi(H)\geq0$ for every $H\geq0$.
Here $A\geq0$ means that $A$ is a positive semidefinite Hermitian matrix.
Equation~\eqref{geo:quadratic-curvature} shows that Griffiths
semipositivity of $h_\Phi$ at the origin is equivalent to
$\Phi(uu^*)\geq0$ for every $u\in\mathbb C^n$. Since every $H\geq0$ can be
written as $H=\sum_k u_k u_k^*$, this is equivalent to positivity of
$\Phi$. By \eqref{geo:quadratic-chern}, we therefore seek a positive map
for which $F_n(\Phi)<0$.

For $f,g\in\mathbb C[x_1,\ldots,x_N]$, the apolar pairing is
\begin{equation}\label{alg:apolar-pairing}
 \langle f,g\rangle
   \coloneqq\left.f(\partial_x)g(x)\right|_{x=0}.
\end{equation}
With $x^\alpha=\prod_\nu x_\nu^{\alpha_\nu}$ and
$\alpha!=\prod_\nu\alpha_\nu!$ for multi-indices
$\alpha,\beta\in\mathbb Z_{\geq0}^N$, this pairing satisfies
\[
 \langle x^\alpha,x^\beta\rangle
  =\begin{cases}\alpha!,&\alpha=\beta,\\0,&\alpha\neq\beta.
    \end{cases}
\]
The pairing is symmetric, and therefore
\[
 F_n(\Phi)=\langle\det Y,\det\Phi(Y)\rangle
           =\langle\det\Phi(Y),\det Y\rangle.
\]
The next lemma realizes the apolar pairing as an expectation of
polynomials in independent standard complex Gaussian variables and
records the corresponding integration-by-parts identity.

\begin{lemma}\label{prob:gaussian-apolarity}
Let $N\geq1$ be an integer and $Z=(Z_1,\ldots,Z_N)$ have independent
standard complex Gaussian coordinates, each with density $\pi^{-1}e^{-|z|^2}$. For polynomials
$f,g,q\in\mathbb C[x_1,\ldots,x_N]$, the following identities hold.
\begin{enumerate}
\item[(i)] The Gaussian realization of the apolar pairing is
\begin{equation}\label{eq:gaussian-pairing}
 \langle f,g\rangle=\mathbb E[f(Z)g(\overline Z)].
\end{equation}
\item[(ii)] For every $1\leq\nu\leq N$, Gaussian integration by parts gives
\begin{equation}\label{eq:gaussian-ibp}
 \mathbb E[\overline{Z_\nu}f(Z)g(\overline Z)]
 =\mathbb E[(\partial_{x_\nu}f)(Z)g(\overline Z)].
\end{equation}
Consequently, multiplication transfers to differentiation as
\begin{equation}\label{alg:apolar-transfer}
 \langle f,qg\rangle=\langle q(\partial_x)f,g\rangle.
\end{equation}
\end{enumerate}
\end{lemma}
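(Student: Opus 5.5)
By bilinearity, both sides of each identity are linear in $f$ and in $g$ separately. It therefore suffices to check monomials $f=x^\alpha$ and $g=x^\beta$. Throughout we use independence of the coordinates, so that expectations of products factor into one-variable expectations.

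For (i), the right-hand side factors as
\[
\mathbb E[Z^\alpha\overline Z^\beta]=\prod_{\nu=1}^N\mathbb E\bigl[Z_\nu^{\alpha_\nu}\overline{Z_\nu}^{\beta_\nu}\bigr].
\]
Each factor is computed in polar coordinates $z=\rho e^{i\theta}$ with density $\pi^{-1}e^{-\rho^2}$. The angular integral $\int_0^{2\pi}e^{i(a-b)\theta}\,d\theta$ vanishes unless $a=b$. When $a=b$, the radial integral is
\[
\pi^{-1}\cdot2\pi\int_0^\infty\rho^{2a+1}e^{-\rho^2}\,d\rho=\int_0^\infty s^ae^{-s}\,ds=a!,
\]
using the substitution $s=\rho^2$. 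The product is therefore $\alpha!$ if $\alpha=\beta$ and $0$ otherwise. This is exactly the formula for $\langle x^\alpha,x^\beta\rangle$ recorded before the lemma, which follows from $\partial^\alpha x^\beta=\alpha!\,\delta_{\alpha\beta}$ at $x=0$.

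For \eqref{eq:gaussian-ibp}, I will avoid an analytic integration by parts and reduce to (i) instead. With $f=x^\alpha$ and $g=x^\beta$, the left-hand side is
\[
\mathbb E[\overline{Z_\nu}Z^\alpha\overline Z^\beta]=\mathbb E[Z^\alpha\overline Z^{\beta+e_\nu}],
\]
which by (i) equals $\alpha!$ if $\alpha=\beta+e_\nu$ and $0$ otherwise. On the right-hand side, $\partial_{x_\nu}x^\alpha=\alpha_\nu x^{\alpha-e_\nu}$, and this is zero when $\alpha_\nu=0$. By (i) the right-hand side equals $\alpha_\nu(\alpha-e_\nu)!$ when $\alpha-e_\nu=\beta$, and $0$ otherwise. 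Since $\alpha_\nu(\alpha-e_\nu)!=\alpha!$, the two sides agree.

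Alternatively, one could integrate by parts directly, using $\partial_{z}\bigl(e^{-|z|^2}\bigr)=-\bar z\,e^{-|z|^2}$ and the fact that $g(\overline Z)$ is antiholomorphic. The monomial route avoids boundary terms.

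For \eqref{alg:apolar-transfer}, it suffices by linearity in $q$ to treat monomials $q=x^\gamma$. Iterating \eqref{eq:gaussian-ibp}, with each step applied to the pair $(\partial^{\gamma'}f,\;x^{\gamma-\gamma'}g)$ for the appropriate partial index $\gamma'$, gives
\[
\mathbb E\bigl[f(Z)\,\overline{Z}^\gamma g(\overline Z)\bigr]=\mathbb E\bigl[(\partial^\gamma f)(Z)\,g(\overline Z)\bigr].
\]
The left-hand side is $\langle f,qg\rangle$ by (i), since $(qg)(\overline Z)=\overline Z^\gamma g(\overline Z)$. The right-hand side is $\langle q(\partial_x)f,g\rangle$ by (i).

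There is no genuine obstacle in this argument. The only points needing care are the normalization of the density, so that the one-variable moments are exactly $a!$, and the bookkeeping of conjugation, so that $g(\overline Z)$ is paired with $f(Z)$ and the multiplier $q$ in \eqref{alg:apolar-transfer} appears through $\overline Z$.
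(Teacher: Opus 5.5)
Your proof is correct. Part (i) and the iteration yielding \eqref{alg:apolar-transfer} coincide with the paper's argument; the genuine difference is how you obtain \eqref{eq:gaussian-ibp}.

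The paper proves \eqref{eq:gaussian-ibp} analytically. It writes $\overline{z_\nu}e^{-\|z\|^2}=-\frac12(\partial_{u_\nu}-i\partial_{v_\nu})e^{-\|z\|^2}$ and integrates by parts. It then uses that $\partial_{u_\nu}-i\partial_{v_\nu}=2\partial_{z_\nu}$ differentiates the holomorphic factor $f(z)$ and annihilates the antiholomorphic factor $g(\overline z)$. This is an honest Stein-type identity that holds well beyond polynomials, which is why the paper presents it as the complex counterpart of Stein's Gaussian identity.

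You instead derive \eqref{eq:gaussian-ibp} from the orthogonality relations of (i) together with $\alpha_\nu(\alpha-e_\nu)!=\alpha!$. Your handling of the case $\alpha_\nu=0$, where both sides vanish, is fine. This route is purely algebraic and needs no decay or boundary-term bookkeeping. It is tied to polynomial $f$, but that is all the lemma requires.

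Your route also makes visible that, once (i) identifies the Gaussian expectation with the apolar pairing, (ii) is just the adjointness of multiplication by $x_\nu$ and $\partial_{x_\nu}$. In fact \eqref{alg:apolar-transfer} follows from the definition and symmetry of the pairing alone, since $\langle qg,f\rangle=g(\partial_x)\bigl(q(\partial_x)f\bigr)\big|_{x=0}=\langle g,q(\partial_x)f\rangle$.
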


\begin{proof}
(i) Angular integration and the substitution $t=|z|^2$ give,
for nonnegative integers $a,b$,
\[
 \mathbb E[Z_\nu^a\overline{Z_\nu}^{\,b}]
 =\begin{cases}
 0,&a\neq b,\\
 \displaystyle\int_0^\infty t^ae^{-t}\,dt=a!,&a=b.
 \end{cases}
\]
Independence gives the corresponding identity for monomials in
$Z_1,\ldots,Z_N$, and bilinearity proves
\eqref{eq:gaussian-pairing} for arbitrary complex coefficients.

(ii) Write $z_\nu=u_\nu+iv_\nu$. Since
\[
 \overline z_\nu e^{-\|z\|^2}
 =-\frac12(\partial_{u_\nu}-i\partial_{v_\nu})e^{-\|z\|^2},
 \qquad
 (\partial_{u_\nu}-i\partial_{v_\nu})
   \bigl(f(z)g(\overline z)\bigr)
 =2(\partial_{x_\nu}f)(z)g(\overline z),
\]
ordinary integration by parts gives
\begin{align*}
 \mathbb E[\overline{Z_\nu}f(Z)g(\overline Z)]
 &=-\frac1{2\pi^N}\int_{\mathbb C^N}
   f(z)g(\overline z)
   (\partial_{u_\nu}-i\partial_{v_\nu})e^{-\|z\|^2}
   \,d\lambda(z)\\
 &=\frac1{2\pi^N}\int_{\mathbb C^N}
   (\partial_{u_\nu}-i\partial_{v_\nu})
   \bigl(f(z)g(\overline z)\bigr)e^{-\|z\|^2}
   \,d\lambda(z)\\
 &=\mathbb E[(\partial_{x_\nu}f)(Z)g(\overline Z)].
\end{align*}
Here $\lambda$ is Lebesgue measure on $\mathbb C^N$. Applying
\eqref{eq:gaussian-ibp} to the factors of each monomial in $q$ and using
linearity gives
\[
 \mathbb E[f(Z)q(\overline Z)g(\overline Z)]
 =\mathbb E[(q(\partial_x)f)(Z)g(\overline Z)].
\]
Together with \eqref{eq:gaussian-pairing}, this proves
\eqref{alg:apolar-transfer}.
\end{proof}

If $h\in\mathbb C[y_{ij}:1\leq i,j\leq n]$ is homogeneous of degree $n$,
then $\det(\partial_Y)h$ is constant. The symmetry of the
apolar pairing, the real coefficients of the determinant, and
Lemma~\ref{prob:gaussian-apolarity}(i) give
\[
 \det(\partial_Y)h
 =\langle h,\det Y\rangle
 =\mathbb E[h(G_n)\overline{\det G_n}].
\]
Here $G_n$ is an $n\times n$ random matrix with independent standard
complex Gaussian entries. Taking
$h=\det\Phi(Y)$ gives
\[
 F_n(\Phi)=\mathbb E\bigl[\overline{\det G_n}\det\Phi(G_n)\bigr],
\]
as in \eqref{eq:gaussian-F}.
In this expectation, a monomial has zero expectation whenever some
Gaussian variable and its conjugate occur with different powers.
This orthogonality identifies the terms that can contribute to $F_n$;
we seek a positive map for which their sum is negative.

The rank-nine map is built from the six-dimensional Breuer--Hall map of
Breuer \cite{Breuer2006} and Hall \cite[Theorem~4]{Hall2006}, which takes
two copies of the three-dimensional reduction map together with
skew-symmetric off-diagonal blocks and is positive. We enlarge it by a
three-dimensional diagonal block whose determinant occurs as a factor in
the determinant of the whole output. In the resulting rank-nine map,
$F_3(R)=0$ eliminates the terms independent of the skew-symmetric blocks,
so the negative Gaussian expectation comes from those blocks.

Write $\mathcal Y\in M_9(\mathbb C)$ as the following
$3\times3$ matrix of $3\times3$ blocks:
\[
 \mathcal Y=
 \begin{pmatrix}
  X&W&U_{13}\\
  V&Y&U_{23}\\
  U_{31}&U_{32}&Z
 \end{pmatrix},
 \qquad
 S\coloneqq\begin{pmatrix}X&W\\V&Y\end{pmatrix}.
\]
For
$J\coloneqq\begin{psmallmatrix}0&I_3\\-I_3&0\end{psmallmatrix}$,
consider
\begin{equation}\label{con:breuer-hall}
 B_6(S)\coloneqq\operatorname{tr}(S)I_6-S-JS^{\mathsf T}J^*.
\end{equation}
The map $B_6$ is the six-dimensional Breuer--Hall map.
We now consider the following family of maps on $M_9(\mathbb C)$:
\begin{equation}\label{con:map}
 \Phi_t(\mathcal Y)\coloneqq
 \begin{pmatrix}
  \begin{pmatrix}Z&0\\0&Z^{\mathsf T}\end{pmatrix}
       +tB_6(S)&0\\
  0&X+Y^{\mathsf T}
 \end{pmatrix},
 \qquad t\geq0.
\end{equation}
Transposition in
\eqref{con:breuer-hall} and \eqref{con:map} acts on matrix indices and
does not conjugate scalars, so both maps are complex-linear.

\begin{proposition}\label{con:positive-family}
For every real $t\geq0$, the map $\Phi_t$ in \eqref{con:map} is positive.
\end{proposition}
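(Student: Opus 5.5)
Since $\Phi_t$ is block diagonal with an upper-left $6\times6$ block and a lower-right $3\times3$ block, it suffices to check, for every $\mathcal Y\ge0$, that each diagonal block of $\Phi_t(\mathcal Y)$ is positive semidefinite. If $\mathcal Y\ge0$, then every principal block of $\mathcal Y$ is positive semidefinite: $X,Y,Z\ge0$ and $S\ge0$. Transposition preserves positive semidefiniteness, because $A^{\mathsf T}=\overline{A}$ for Hermitian $A$ and complex conjugation preserves the spectrum. Hence the lower-right block $X+Y^{\mathsf T}$ is positive semidefinite, and so is $\operatorname{diag}(Z,Z^{\mathsf T})$. Since $t\ge0$, the upper-left block $\operatorname{diag}(Z,Z^{\mathsf T})+tB_6(S)$ is then positive semidefinite once we know $B_6(S)\ge0$ for $S\ge0$, that is, once positivity of the Breuer--Hall map is established.

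For positivity of $B_6$ we could simply cite Breuer \cite{Breuer2006} and Hall \cite[Theorem~4]{Hall2006}. To keep the argument self-contained, I would verify it through the rank-one test \eqref{eq:intro-rank-one-test}. It suffices to take $S=xx^*$ with $x\in\C^6$, and then to show
\[
 v^*B_6(xx^*)v=\|x\|^2\|v\|^2-|x^*v|^2-v^*J\overline{x}x^{\mathsf T}J^*v\ \ge 0
\]
for all $v\in\C^6$. Set $w\coloneqq J\overline x$. The matrix $J$ is real and orthogonal, so $\|w\|=\|x\|$ and $w^*x=x^{\mathsf T}J^{\mathsf T}x=0$ because $J^{\mathsf T}=-J$ is skew-symmetric. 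The last term equals $v^*ww^*v=|w^*v|^2$. We therefore need
\[
 |x^*v|^2+|w^*v|^2\le\|x\|^2\|v\|^2.
\]
This is Bessel's inequality for the orthogonal pair $x,w$, which have equal norms $\|x\|$. So the inequality holds, and $B_6$ is positive.

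Combining the two paragraphs gives the proposition. The only nontrivial point is the orthogonality $w\perp x$. It comes from the skew-symmetry of $J$, which makes $x^{\mathsf T}Jx=0$, so Bessel's inequality does the rest. I expect no real obstacle beyond keeping the transpose-versus-conjugate conventions straight: $JS^{\mathsf T}J^*$ applied to $xx^*$ gives $J\overline{x}x^{\mathsf T}J^*=ww^*$ because $J$ is real.
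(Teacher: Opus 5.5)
Your proof is correct and follows the paper's argument. Both use the block-diagonal structure of $\Phi_t(\mathcal Y)$, positive semidefiniteness of the principal blocks $X,Y,Z,S$ (and of their transposes), and positivity of $B_6$, which the paper obtains by citing Hall's Theorem~4 from $J^{\mathsf T}=-J$ and $J^*J=I_6$. Your rank-one check, using orthogonality of $x$ and $J\overline{x}$ and Bessel's inequality, derives that cited positivity directly from these two properties of $J$, so your argument is self-contained.
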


\begin{proof}
The matrix $J$ satisfies $J^{\mathsf T}=-J$ and $J^*J=I_6$.
Hence Hall's positivity theorem for the extended reduction map
\cite[Theorem~4]{Hall2006} applies to \eqref{con:breuer-hall} and gives
$B_6(S)\geq0$ whenever $S\geq0$.

If $\mathcal Y\geq0$, then $S,X,Y,Z\geq0$.
Together with
$B_6(S)\geq0$ and $t\geq0$, these inequalities prove
$\Phi_t(\mathcal Y)\geq0$.
\end{proof}

Proposition~\ref{con:positive-family} and
\eqref{geo:quadratic-curvature} show that $\Omega^{h_{\Phi_t}}|_0$ is
Griffiths semipositive. By \eqref{geo:quadratic-chern},
\[
 c_9(\mathbb C^9\times\mathbb C^9,h_{\Phi_t})|_0
 =F_9(\Phi_t)dV_9|_0.
\]
The desired separation of the two positivity conditions therefore
follows from the next exact evaluation.

\begin{theorem}\label{alg:main}
For every real $t\geq0$, the positive map $\Phi_t$ of
\eqref{con:map} satisfies
\begin{equation}\label{alg:main-value}
 F_9(\Phi_t)=-1536t^3.
\end{equation}
\end{theorem}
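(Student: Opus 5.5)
The plan is to evaluate $F_9(\Phi_t)=\mathbb E[\overline{\det G_9}\det\Phi_t(G_9)]$ from \eqref{eq:gaussian-F} by a degree-counting (orthogonality) argument followed by Gaussian integration by parts (Lemma~\ref{prob:gaussian-apolarity}). Write $G_9$ in the block form of $\mathcal Y$. The output $\Phi_t(\mathcal Y)$ is block diagonal, so
\[
\det\Phi_t(\mathcal Y)=\det(X+Y^{\mathsf T})\cdot\det\bigl(D(Z)+tB_6(S)\bigr),\qquad D(Z)=\operatorname{diag}(Z,Z^{\mathsf T}).
\]
It contains no $U$-variables. By orthogonality of Gaussian monomials, only monomials of $\det\Phi_t(\mathcal Y)$ that match a monomial of $\det\mathcal Y$ contribute. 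Such a monomial has exactly one variable in each row and each column of $\mathcal Y$.

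\textbf{Degree count.} Rows $7,8,9$ must then be served by $Z$-entries, so the paired monomial has exactly three $Z$-variables forming a permutation monomial of $Z$. The remaining six variables form a permutation monomial of $S$. Hence $\det\mathcal Y$ may be replaced by $\det Z\cdot\det S$. The factor $\det(X+Y^{\mathsf T})$ is cubic in $S$-variables. So from $\det(D(Z)+tB_6(S))$ we need exactly three $Z$-factors and three $S$-factors, which is the $t^3$-coefficient. This already proves $F_9(\Phi_t)=ct^3$.

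\textbf{Factorization.} By the generalized Laplace expansion, this coefficient is
\[
\sum_{|I|=|J|=3}\pm\det D(Z)[I^c,J^c]\,\det B_6(S)[I,J].
\]
Since the $Z$- and $S$-Gaussians are independent, the expectation factorizes into a $Z$-part $\mathbb E[\overline{\det Z}\det D(Z)[I^c,J^c]]$ and an $S$-part. The $Z$-part is computed by apolarity with $\det Z$. Because $D(Z)$ is block diagonal with blocks $Z$ and $Z^{\mathsf T}$, the surviving minors are those taking all of $Z$ or all of $Z^{\mathsf T}$, together with certain mixed $2+1$ and $1+2$ choices. These give explicit signed cofactor pairings.

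Next compute $B_6$ explicitly. Since
\[
JS^{\mathsf T}J^*=\begin{pmatrix}Y^{\mathsf T}&-W^{\mathsf T}\\-V^{\mathsf T}&X^{\mathsf T}\end{pmatrix},
\]
we get
\[
B_6(S)=\begin{pmatrix}R(X+Y^{\mathsf T})&W^{\mathsf T}-W\\V^{\mathsf T}-V&R\bigl((X+Y^{\mathsf T})^{\mathsf T}\bigr)\end{pmatrix}.
\]
The terms using only the diagonal blocks are, after the $Z$-pairing, multiples of pairings of the form $\langle \det X\det Y,\ \det(X+Y^{\mathsf T})\det R(X+Y^{\mathsf T})\rangle$ or its minor analogues. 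I will reduce these via the change of variables $A=X+Y^{\mathsf T}$ and apolarity to $F_3(R)=\det(\partial_A)\det R(A)=0$, so they vanish. This step should be checked carefully on each minor type.

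\textbf{Main obstacle: the cross terms.} The remaining terms involve the skew blocks $W^{\mathsf T}-W$ and $V^{\mathsf T}-V$. Pairing against $\det S$, which is not a product of diagonal-block determinants, forces $W$ and $V$ to appear together. I would first expand $\det S$ by the Laplace expansion along the first three rows, and use \eqref{alg:apolar-transfer} to move the factors coming from $\det(X+Y^{\mathsf T})$ onto derivatives. This should collapse the computation to moments of a single $3\times3$ Gaussian matrix $G$, namely
\[
\mathbb E[|\det G|^2|\operatorname{tr}G|^2]-\mathbb E[|\det G|^2\|G\|_{\mathrm F}^2].
\]
Here the trace comes from the $\operatorname{tr}(\cdot)I$ part of $R$, and the Frobenius norm comes from the skew entries. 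To evaluate these I would use \eqref{alg:apolar-transfer} and the Cayley identity $\det(\partial_G)\det(G)^s=s(s+1)(s+2)\det(G)^{s-1}$, together with $\mathbb E|\det G|^2=3!$.
\begin{itemize}
\item For $\|G\|_{\mathrm F}^2$: apply integration by parts to each $g_{ij}$; this uses $\partial_{g_{ij}}\det G=$ cofactor and $\sum_{i,j}|\mathrm{cof}_{ij}|^2$ moments.
\item For $|\operatorname{tr}G|^2$: use $\partial_{g_{ii}}\det G$ on the diagonal entries.
\end{itemize}
The difference should be negative, since the Frobenius norm counts every entry and the trace only the diagonal. Collecting the combinatorial multiplicities and signs should yield $c=-1536$. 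Tracking these signs and multiplicities across the Laplace expansion is where I expect most of the work and the risk of error.
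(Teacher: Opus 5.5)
\textbf{Verdict: genuine gap.} The theorem is an exact evaluation, and the step that carries all of its content (the cross terms) is not carried out.

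Your preliminary reductions are correct:
\begin{itemize}
\item the $U$-variables drop out, so $\det\mathcal Y$ can be replaced by $\det Z\det S$;
\item the degree count gives $F_9(\Phi_t)=ct^3$, which is a clean homogeneity argument;
\item the generalized Laplace expansion factors through the independent $Z$- and $S$-Gaussians.
\end{itemize}

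The $K,L$-free part does vanish, but only after summing over all minors of a given type. That sum reassembles a multiple of
\[
 \langle\det S,(\det A)\det R(A)\rangle=\mathcal D^2\bigl((\det A)\det R(A)\bigr)=24\,\mathcal D\det R(A)=0,
\]
with $A=X+Y^{\mathsf T}$ and $\mathcal D=\det(\partial_A)$. An individual mixed term, for example the one built from $R(A)_{12}$ and its complementary $2\times2$ minor, pairs to a nonzero value. So do not try to verify the vanishing minor by minor.

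The gap is in the cross terms, which you leave at ``should collapse'' and ``should yield $c=-1536$''. You give no mechanism by which the $K$--$L$ parts of the minors $\det B_6(S)[I,J]$ reduce to a single Gaussian moment. Those parts are weighted by Laplace signs and by the cofactor pairings from the $Z$-side, then paired with $\det S$; you do not say what multiplicity results. Hence neither the constant nor even the sign is established.

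Your account of the moment is also off. The Frobenius term comes from the $-A$ part of $R(A)=\operatorname{tr}(A)I_3-A$, not from the skew entries. The skew blocks supply the contraction $\sum_{p,q}\overline{g_{pq}}\,R(G)_{pq}=|\operatorname{tr}G|^2-\|G\|_{\mathrm F}^2$.

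The missing idea is a closed form for the $6\times6$ block determinant. Write $W^{\mathsf T}-W=[k]$ and $V^{\mathsf T}-V=[l]$ as cross-product matrices. The Schur complement, the identity $M^{-\mathsf T}[l]M^{-1}=[Ml]/\det M$, and the matrix determinant lemma then give
\[
 \det\Phi_t(\mathcal Y)=(\det A)\bigl(\det M+t^2k^{\mathsf T}Ml\bigr)^2,\qquad M=Z+tR(A).
\]
The Cayley identities $\det(\partial_M)(\det M)^2=24\det M$ and $\det(\partial_M)\bigl((\det M)M_{ij}\bigr)=8M_{ij}$ reduce the $Z$-pairing to $t^3(\det A)\bigl[24\det R(A)+16k^{\mathsf T}R(A)l\bigr]$.

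The same identity also applies to commuting operators, via $\partial_X=\partial_A$, $\partial_Y=\partial_A^{\mathsf T}$, $\partial_W=-[\partial_k]$, $\partial_V=-[\partial_l]$. This gives $\det(\partial_S)=(\mathcal D+\mathcal L)^2$ with $\mathcal L=\sum_{i,j}\partial_{A_{ij}}\partial_{k_i}\partial_{l_j}$. On the second summand only $2\mathcal D\mathcal L$ survives, so
\begin{align*}
 F_9(\Phi_t)&=32t^3\,\mathcal D\sum_{p,q}\partial_{A_{pq}}\bigl((\det A)R(A)_{pq}\bigr)\\
 &=32t^3\,\mathbb E\bigl[|\det G|^2(|\operatorname{tr}G|^2-\|G\|_{\mathrm F}^2)\bigr]
 =32t^3(24-72)=-1536t^3.
\end{align*}

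Your planned moment evaluation ($\mathbb E[|\det G|^2|g_{pq}|^2]=8$, with vanishing diagonal cross terms) handles the last equality correctly. What your sketch never produces is the reduction to this single moment and the factor $32=2\cdot16$.
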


\begin{proof}
Set
\[
 A\coloneqq X+Y^{\mathsf T},\qquad
 K\coloneqq W^{\mathsf T}-W,\qquad L\coloneqq V^{\mathsf T}-V.
\]
A direct computation gives
\[
 JS^{\mathsf T}J^*
 =\begin{pmatrix}Y^{\mathsf T}&-W^{\mathsf T}\\
                 -V^{\mathsf T}&X^{\mathsf T}\end{pmatrix},
 \qquad
 \operatorname{tr}(S)=\operatorname{tr}(A),
\]
and therefore
\begin{equation}\label{con:block-form}
 B_6(S)=\begin{pmatrix}R(A)&K\\L&R(A)^{\mathsf T}\end{pmatrix},
 \qquad
 \Phi_t(\mathcal Y)
 =\begin{pmatrix}
   \begin{pmatrix}Z&0\\0&Z^{\mathsf T}\end{pmatrix}+tB_6(S)&0\\
   0&A
  \end{pmatrix},
\end{equation}
where $R(A)=\operatorname{tr}(A)I_3-A$ is the three-dimensional
reduction map on $M_3(\mathbb C)$. Consequently,
\[
 \det\Phi_t(\mathcal Y)
 =\det A\,
  \det\begin{pmatrix}
   Z+tR(A)&tK\\
   tL&Z^{\mathsf T}+tR(A)^{\mathsf T}
  \end{pmatrix}.
\]
For a column vector $a=(a_1,a_2,a_3)^{\mathsf T}$, the notation $[a]$
denotes the matrix
\begin{equation}\label{alg:skew-convention}
 [a]\coloneqq
 \begin{pmatrix}
 0&a_3&-a_2\\
 -a_3&0&a_1\\
 a_2&-a_1&0
 \end{pmatrix}.
\end{equation}
Write $W=(w_{ij})_{i,j=1}^3$ and $V=(v_{ij})_{i,j=1}^3$. Under the
convention \eqref{alg:skew-convention}, the skew-symmetric matrices
$K=W^{\mathsf T}-W$ and $L=V^{\mathsf T}-V$ are represented by the
column vectors
\begin{equation}\label{alg:reduced-variables}
 k\coloneqq
 \begin{pmatrix}w_{32}-w_{23}\\w_{13}-w_{31}\\w_{21}-w_{12}\end{pmatrix},
 \qquad
 l\coloneqq
 \begin{pmatrix}v_{32}-v_{23}\\v_{13}-v_{31}\\v_{21}-v_{12}\end{pmatrix}.
\end{equation}
Indeed, $[k]=K=W^{\mathsf T}-W$ and $[l]=L=V^{\mathsf T}-V$.
Set $M\coloneqq Z+tR(A)$. If $M$ is invertible, then
\eqref{alg:skew-convention} gives
\[
 M^{-\mathsf T}[l]M^{-1}=\frac{1}{\det M}[Ml],
 \qquad
 [Ml][k]=k(Ml)^{\mathsf T}-(k^{\mathsf T}Ml)I_3.
\]
The Schur complement and the matrix determinant lemma give
\begin{align}
 \det\Phi_t(\mathcal Y)
 &=(\det A)(\det M)^2
   \det\left(I_3-\frac{t^2}{\det M}[Ml][k]\right)\notag\\
 &=(\det A)\bigl(\det M+t^2k^{\mathsf T}Ml\bigr)^2\notag\\
 &=(\det A)\left[
    \det\bigl(Z+tR(A)\bigr)
       +t^2 k^{\mathsf T}\bigl(Z+tR(A)\bigr)l
         \right]^2.
 \label{alg:determinant-polynomial}
\end{align}
Both sides are polynomials in the entries of $M$, so the identity also
holds when $M$ is singular.

The polynomial in \eqref{alg:determinant-polynomial} depends only on
$\mathcal Y_{ij}$ with either $1\leq i,j\leq6$ or $7\leq i,j\leq9$.
Consequently, a term
$\prod_{i=1}^9\partial/\partial\mathcal Y_{i,\sigma(i)}$ in
$\det(\partial_{\mathcal Y})$ vanishes unless $\sigma$ preserves both
$\{1,\ldots,6\}$ and $\{7,8,9\}$. Thus only
$\sigma\in S_6\times S_3\subset S_9$ contribute, and on this polynomial
\begin{equation}\label{alg:block-differential-factorization}
 \det(\partial_{\mathcal Y})
 =\det(\partial_S)\det(\partial_Z).
\end{equation}

The polynomial $\det\Phi_t(\mathcal Y)$ is homogeneous of total degree nine
in the entries of $S$ and $Z$. Since $\det(\partial_Z)$ is a differential
operator of order three, $\det(\partial_Z)\det\Phi_t(\mathcal Y)$ is
homogeneous of total degree six in the entries of $S$ and $Z$. Every term
of positive degree in $Z$ has
degree at most five in the entries of $S$ and is therefore annihilated
by the differential operator $\det(\partial_S)$, which has order six.
Therefore,
\begin{equation}\label{alg:F-block-reduction}
 F_9(\Phi_t)
 =\det(\partial_S)\det(\partial_Z)\det\Phi_t(\mathcal Y)
 =\det(\partial_S)\!\left(
   \left.\det(\partial_Z)\det\Phi_t(\mathcal Y)\right|_{Z=0}
  \right).
\end{equation}

The ordinary and all-minors Cayley identities for $3\times3$ matrices
\cite[Theorem~2.1]{CaraccioloSokalSportiello2013} give
\begin{equation}\label{alg:cayley-identities}
 \det(\partial_M)\bigl((\det M)^2\bigr)=24\det M,
 \qquad
 \det(\partial_M)\bigl((\det M)M_{ij}\bigr)=8M_{ij}.
\end{equation}
The second formula follows by applying the product rule to the cofactor
case of the all-minors identity. Expanding the square now yields, for
$a,b\in\mathbb C^3$,
\[
 \det(\partial_M)\bigl(\det M+a^{\mathsf T}Mb\bigr)^2
 =24\det M+16a^{\mathsf T}Mb.
\]
Taking $M=Z+tR(A)$, $a=tk$, and $b=tl$ gives
\begin{align}
 \left.\det(\partial_Z)\det\Phi_t(\mathcal Y)\right|_{Z=0}
 &=\left.(\det A)\left[24\det\bigl(Z+tR(A)\bigr)
             +16t^2k^{\mathsf T}\bigl(Z+tR(A)\bigr)l\right]\right|_{Z=0}\notag\\
 &=t^3(\det A)\bigl[24\det R(A)+16k^{\mathsf T}R(A)l\bigr].
 \label{alg:after-Z}
\end{align}

To apply the remaining factor $\det(\partial_S)$ in
\eqref{alg:block-differential-factorization} to
\eqref{alg:after-Z}, write
\begin{equation}\label{alg:operators}
 \mathcal D\coloneqq\det(\partial_A),\qquad
 \mathcal L\coloneqq\sum_{i,j=1}^3
    \frac{\partial}{\partial A_{ij}}
    \frac{\partial}{\partial k_i}
    \frac{\partial}{\partial l_j}.
\end{equation}
On polynomials in the scalar variables $A_{ij},k_r,l_r$
$(1\leq i,j,r\leq3)$, regarded as polynomials in the entries of
$X,Y,W,V$ through $A=X+Y^{\mathsf T}$ and
\eqref{alg:reduced-variables}, one has
\begin{equation}\label{alg:chain-rule}
 \partial_X=\partial_A,\qquad
 \partial_Y=\partial_A^{\mathsf T},\qquad
 \partial_W=-[\partial_k],\qquad
 \partial_V=-[\partial_l].
\end{equation}
All these constant-coefficient differential operators commute. The
block determinant calculation in \eqref{alg:determinant-polynomial}
therefore gives
\begin{align}
 \det(\partial_S)
 &=\det\begin{pmatrix}
       \partial_A&-[\partial_k]\\
       -[\partial_l]&\partial_A^{\mathsf T}
      \end{pmatrix}\notag\\
 &=\left(\det(\partial_A)
       +(-\partial_k)^{\mathsf T}\partial_A(-\partial_l)\right)^2\notag\\
 &=\left(\det(\partial_A)+\sum_{i,j=1}^3
       \frac{\partial}{\partial A_{ij}}
       \frac{\partial}{\partial k_i}
       \frac{\partial}{\partial l_j}\right)^2\notag\\
 &=(\mathcal D+\mathcal L)^2
   =\mathcal D^2+2\mathcal D\mathcal L+\mathcal L^2.
 \label{alg:six-dimensional-differential}
\end{align}
Since $(\det A)\det R(A)$ is independent of $k,l$,
\[
 \mathcal L\bigl((\det A)\det R(A)\bigr)=0,
 \qquad
 (\mathcal D+\mathcal L)^2\bigl((\det A)\det R(A)\bigr)
 =\mathcal D^2\bigl((\det A)\det R(A)\bigr).
\]
For the second summand, writing
$k^{\mathsf T}R(A)l=\sum_{p,q=1}^3k_pR(A)_{pq}l_q$ gives
\[
 \mathcal L\bigl((\det A)k^{\mathsf T}R(A)l\bigr)
 =\sum_{i,j,p,q=1}^3
   \partial_{A_{ij}}\partial_{k_i}\partial_{l_j}
   \bigl((\det A)k_pR(A)_{pq}l_q\bigr)
 =\sum_{i,j=1}^3\partial_{A_{ij}}
   \bigl((\det A)R(A)_{ij}\bigr).
\]
Moreover, $(\det A)k^{\mathsf T}R(A)l$ has degree four in the entries of $A$,
whereas $\mathcal D^2$ has order six in those entries. Consequently,
\[
 (\mathcal D+\mathcal L)^2\bigl((\det A)k^{\mathsf T}R(A)l\bigr)
 =2\mathcal D\sum_{i,j=1}^3\partial_{A_{ij}}
   \bigl((\det A)R(A)_{ij}\bigr).
\]
By \eqref{alg:F-block-reduction}, \eqref{alg:after-Z}, and
\eqref{alg:six-dimensional-differential},
\begin{align}
 F_9(\Phi_t)
 &=t^3(\mathcal D+\mathcal L)^2\left(
    (\det A)\bigl[24\det R(A)+16k^{\mathsf T}R(A)l\bigr]
   \right)\notag\\
 &=t^3\biggl[
   24\mathcal D^2\bigl((\det A)\det R(A)\bigr)
    +32\mathcal D\sum_{p,q=1}^3
          \partial_{A_{pq}}\bigl((\det A)R(A)_{pq}\bigr)
                         \biggr].\label{alg:two-contributions}
\end{align}

We next compute in turn the two terms in
\eqref{alg:two-contributions}.
Writing $\tau\coloneqq\operatorname{tr}A$, the characteristic polynomial
of $A$ gives
\[
 \det R(A)=\tau e_2(A)-\det A,
 \qquad e_2(A)=\sum_{1\leq i<j\leq3}(A_{ii}A_{jj}-A_{ij}A_{ji}).
\]
A direct evaluation gives
$\mathcal D(\tau e_2(A))=\mathcal D(\det A)=6$, and hence
$\mathcal D\det R(A)=0$.
Equations~\eqref{alg:apolar-transfer} and
\eqref{alg:cayley-identities} now give
\begin{align}
 \mathcal D^2\bigl((\det A)\det R(A)\bigr)
 &=\langle(\det A)^2,(\det A)\det R(A)\rangle\notag\\
 &=\langle \mathcal D((\det A)^2),\det R(A)\rangle
  =24\mathcal D\det R(A)=0.
 \label{alg:first-contribution}
\end{align}

For the second term in \eqref{alg:two-contributions}, let
$G=(g_{pq})$ be a $3\times3$ random matrix with independent standard
complex Gaussian entries. By Lemma~\ref{prob:gaussian-apolarity}, one has
\begin{align}
 \mathcal D\sum_{p,q}\partial_{A_{pq}}\bigl((\det A)R(A)_{pq}\bigr)
 &=\sum_{p,q}\mathbb E\bigl[
    \overline{\det G}\,
    \partial_{A_{pq}}\bigl((\det A)R(A)_{pq}\bigr)(G)\bigr]\notag\\
 &=\mathbb E\bigl[|\det G|^2
       \sum_{p,q}\overline{g_{pq}}R(G)_{pq}\bigr]\notag\\
 &=\mathbb E\bigl[|\det G|^2
       (|\operatorname{tr}G|^2-\|G\|_{\mathrm F}^2)\bigr].
 \label{alg:gaussian-sign}
\end{align}
Here $\|G\|_{\mathrm F}^2=\sum_{p,q}|g_{pq}|^2$.
Lemma~\ref{prob:gaussian-apolarity}(i) and
\eqref{alg:cayley-identities} yield, for $1\leq p,q,r,s\leq3$,
\begin{align*}
 \mathbb E[|\det G|^2g_{pq}\overline{g_{rs}}]
 &=\langle(\det A)A_{rs},(\det A)A_{pq}\rangle\\
 &=\left.\partial_{A_{rs}}\mathcal D
   \bigl((\det A)A_{pq}\bigr)\right|_{A=0}
 =8\delta_{pr}\delta_{qs}.
\end{align*}
Thus the diagonal cross terms in the weighted trace moment vanish,
while every squared entry contributes to the weighted Frobenius moment:
\[
 \mathbb E[|\det G|^2|\operatorname{tr}G|^2]=24,
 \qquad
 \mathbb E[|\det G|^2\|G\|_{\mathrm F}^2]=72.
\]
Their difference determines the sign of the contribution from the two
skew-symmetric blocks in \eqref{alg:gaussian-sign}:
\begin{equation}\label{alg:gaussian-difference}
 \mathbb E\bigl[|\det G|^2
 (|\operatorname{tr}G|^2-\|G\|_{\mathrm F}^2)\bigr]
 =-48.
\end{equation}
Equations \eqref{alg:two-contributions}--\eqref{alg:gaussian-difference}
yield
\[
 F_9(\Phi_t)=-1536t^3,
\]
which proves \eqref{alg:main-value}.
\end{proof}

\section{Compact counterexamples from positive maps}\label{sec:geometric-counterexample}
\label{sec:strict}\label{sec:forms}\label{sec:metric}

We extend the rank-nine map to every $r\geq9$, make it strictly positive
by a trace perturbation, and realize the resulting map on projective space.

For $r\geq9$, $\eta\geq0$, and $Y=(y_{ij})_{i,j=1}^r\in M_r$, let $X$ be the leading
$9\times9$ principal submatrix of $Y$ and set
\begin{equation}\label{geo:extended-map}
 \Lambda_{r,\eta}(Y)\coloneqq
 \operatorname{diag}\bigl(\Phi_1(X),y_{10,10},\ldots,y_{rr}\bigr)
 +\eta\operatorname{tr}(Y)I_r,
\end{equation}
where the scalar diagonal entries are omitted when $r=9$.

\begin{proposition}\label{geo:strict-map}
For every integer $r\geq9$, there exists $\eta_0=\eta_0(r)>0$ such that
$\Lambda_{r,\eta}$ is strictly positive and
$F_r(\Lambda_{r,\eta})<0$ whenever $0<\eta<\eta_0$.
\end{proposition}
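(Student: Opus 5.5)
Two things must be shown for $\Lambda_{r,\eta}$: strict positivity, and negativity of $F_r$. Strict positivity holds for every $\eta>0$; the negative sign survives only for small $\eta$, by continuity from $\eta=0$.

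\emph{Strict positivity.} Let $H\in M_r$ be positive semidefinite and nonzero. Its leading $9\times9$ principal submatrix $X$ is positive semidefinite, so Proposition~\ref{con:positive-family} with $t=1$ gives $\Phi_1(X)\geq0$. The diagonal entries $y_{jj}$ of $H$ with $j\geq10$ are nonnegative. Hence the block-diagonal part of \eqref{geo:extended-map} is positive semidefinite. Since $H\neq0$ and $H\geq0$, we have $\operatorname{tr}(H)>0$, so the term $\eta\operatorname{tr}(H)I_r$ is positive definite for every $\eta>0$. Adding the two terms shows $\Lambda_{r,\eta}(H)>0$, so $\Lambda_{r,\eta}$ is strictly positive for every $\eta>0$.

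\emph{Sign at $\eta=0$.} Next I compute $F_r(\Lambda_{r,0})$ and show it is negative. We have
\[
\det\Lambda_{r,0}(Y)=\det\Phi_1(X)\prod_{j=10}^r y_{jj}.
\]
This polynomial involves only the entries of $X$ and the diagonal variables $y_{jj}$ with $j\geq10$. Hence in $\det(\partial_Y)$ a term $\prod_i\partial/\partial y_{i,\sigma(i)}$ acts nontrivially only if $\sigma$ preserves $\{1,\dots,9\}$ and fixes each $j\geq10$. This is the same block argument as in \eqref{alg:block-differential-factorization}. So on this polynomial
\[
\det(\partial_Y)=\det(\partial_X)\prod_{j\geq10}\partial_{y_{jj}},
\]
and therefore
\[
F_r(\Lambda_{r,0})=F_9(\Phi_1)\cdot 1=-1536
\]
by Theorem~\ref{alg:main}.

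\emph{Continuity.} The expression $F_r(\Lambda_{r,\eta})=\det(\partial_Y)\det\Lambda_{r,\eta}(Y)$ is a constant obtained by differentiating a polynomial whose coefficients are polynomial in $\eta$. It is therefore a polynomial in $\eta$, with value $-1536$ at $\eta=0$. Consequently there is $\eta_0(r)>0$ such that $F_r(\Lambda_{r,\eta})<0$ for $0\leq\eta<\eta_0$. Combined with strict positivity for $\eta>0$, this gives the proposition.

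The only step needing any care is the block reduction of $\det(\partial_Y)$, specifically the claim that no permutation mixing the two index blocks contributes. It follows because $\det\Lambda_{r,0}$ contains no off-block variables. Quantitative control of $\eta_0$ is not needed, since mere existence follows from polynomiality in $\eta$.
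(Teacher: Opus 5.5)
Your proof is correct and follows the paper's argument essentially step for step: positivity of $\Lambda_{r,0}$ from Proposition~\ref{con:positive-family}, strict positivity from the term $\eta\operatorname{tr}(Y)I_r$, the block factorization giving $F_r(\Lambda_{r,0})=F_9(\Phi_1)=-1536$ (which you justify in more detail than the paper), and polynomial dependence on $\eta$. The one point the paper states that you leave implicit is that $\Lambda_{r,\eta}$ has real coefficients, so $F_r(\Lambda_{r,\eta})$ is a real polynomial in $\eta$ and the inequality $F_r(\Lambda_{r,\eta})<0$ makes sense.
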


\begin{proof}
If $Y\geq0$, then $X\geq0$ and $y_{jj}\geq0$ for $j>9$.
Proposition~\ref{con:positive-family} therefore shows that
$\Lambda_{r,0}$ is positive. Moreover,
\[
 \det\Lambda_{r,0}(Y)=\det\Phi_1(X)\prod_{j=10}^ry_{jj}.
\]
By \eqref{eq:F-definition},
\[
 F_r(\Lambda_{r,0})=\det(\partial_X)\det\Phi_1(X)=F_9(\Phi_1)=-1536.
\]
For every nonzero $Y\geq0$ and every $\eta>0$,
\[
 \Lambda_{r,\eta}(Y)
 =\Lambda_{r,0}(Y)+\eta\operatorname{tr}(Y)I_r
 \geq\eta\operatorname{tr}(Y)I_r>0.
\]
Thus $\Lambda_{r,\eta}$ is strictly positive. Its coefficients are real,
so $F_r(\Lambda_{r,\eta})$ is a real polynomial in $\eta$. Since its value
at $\eta=0$ is $-1536$, it remains negative for all sufficiently small
$\eta>0$.
\end{proof}

For a strictly positive complex-linear map $\Lambda:M_n\to M_q$,
\eqref{geo:quadratic-metric} takes the form
\[
 h_\Lambda(z)\coloneqq I_q-2\pi\Lambda(zz^*).
\]
The following proposition realizes its curvature at a point by a global
Griffiths-positive metric. The realization can be confined to an arbitrary
neighborhood of that point.

\begin{proposition}\label{geo:projective-realization}
Let $n,q\geq1$ and $\Lambda:M_n\to M_q$ be a strictly positive
complex-linear map. The bundle $\mathcal O_{\mathbb P^n}(1)^{\oplus q}$
admits a smooth Griffiths-positive Hermitian metric $h$ such that, in the
standard affine coordinates and local frame at
$p\coloneqq[1:0:\cdots:0]$,
\[
 \Omega^h|_p
 =i\sum_{i,j=1}^n\Lambda(E_{ij})\,dz_i\wedge d\bar z_j\big|_p.
\]
Given any neighborhood $U$ of $p$, the metric can be chosen to agree with
the Fubini--Study direct sum metric outside $U$ and with $h_\Lambda$ near
$p$.
\end{proposition}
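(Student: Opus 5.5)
We glue $h_\Lambda$ to the Fubini--Study direct sum metric $h_{FS}$ with a cutoff $\chi$ that varies slowly on a logarithmic radial scale. The resulting metric is $h=\chi h_\Lambda+(1-\chi)h_{FS}$, and we show it is Griffiths positive everywhere. We work in the affine chart $z\in\mathbb C^n$ around $p$, with the standard frame $e_j=z_0$ (projected to the $j$th summand). There
\[
 h_{FS}(z)=(1+|z|^2)^{-1}I_q=I_q-|z|^2I_q+O(|z|^4).
\]
Both $h_{FS}$ and $h_\Lambda$ equal $I_q$ at $0$ and have vanishing first derivatives there. Hence $D\coloneqq h_\Lambda-h_{FS}$ satisfies
\[
 D=O(|z|^2),\qquad dD=O(|z|),\qquad \partial\bar\partial D=O(1).
\]
Since $\Lambda$ is strictly positive and the sphere of unit $u$ is compact, there is $c>0$ with $\Lambda(uu^*)\geq c|u|^2I_q$. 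Moreover, $h_{FS}$ has curvature $\omega_{FS}\otimes I_q$, which at $0$ corresponds to $\operatorname{Id}(uu^*)=|u|^2I_q$.

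\emph{Step 1: uniform positivity along the segment.} For $s\in[0,1]$ set $h_s=sh_\Lambda+(1-s)h_{FS}$. At $z=0$, formula \eqref{geo:curvature-coefficients} together with the vanishing first derivatives shows that the curvature of $h_s$ is $i\sum(s\Lambda(E_{ij})+(1-s)\delta_{ij}I_q)\,dz_i\wedge d\bar z_j$. This is bounded below by $\min(c,1)|u|^2|v|^2$. By continuity in $(s,z)$ and compactness of $s\in[0,1]$, there is $\rho_0>0$ such that for $|z|<\rho_0$ all $h_s$ are positive definite and satisfy
\[
 \langle R^{h_s}(u,\bar u)v,v\rangle_{h_s}\geq c'|u|^2|v|^2_{h_s},
\]
uniformly in $s$.

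\emph{Step 2: the cutoff and its error terms.} Choose $\chi(z)=\psi(\log\log(1/|z|))$ (or a similar iterated logarithmic profile). Here $\psi$ is $1$ for large argument and $0$ for small argument, so $\chi\equiv1$ near $0$ and $\chi\equiv0$ for $|z|\geq\rho<\rho_0$, with $\rho$ small enough that this region lies in $U$. Such a profile gives
\[
 |\partial\chi|\leq\epsilon/(|z|\log(1/|z|)),\qquad |\partial\bar\partial\chi|\leq\epsilon/(|z|^2\log(1/|z|)),
\]
where $\epsilon$ can be made as small as desired by spreading the transition over a long interval of $\log\log$. Writing $h=h_{FS}+\chi D$, we differentiate:
\[
 \partial h=\partial h_{\chi(z)}\big|_{s\text{ frozen}}+(\partial\chi)D,\qquad \partial\bar\partial h=(\partial\bar\partial h_s)_{s=\chi}+\partial\chi\wedge\bar\partial D+\partial D\wedge\bar\partial\chi+(\partial\bar\partial\chi)D.
\]
Inserting these into \eqref{geo:curvature-coefficients}, the curvature of $h$ at $z$ equals that of $h_{s}$ with $s=\chi(z)$ frozen, plus error terms of three types:
\[
 |\partial\chi||D|\,|dh|,\qquad |\partial\chi||dD|,\qquad |\partial\bar\partial\chi||D|.
\]
Using $|D|\lesssim|z|^2$ and $|dD|\lesssim|z|$, each is $O(\epsilon)$; the first is even smaller, since $|dh_s|=O(|z|)$. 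For $\epsilon$ small the errors are dominated by $c'$, so $h$ is Griffiths positive on $|z|<\rho_0$. Outside the ball of radius $\rho$, $h=h_{FS}$, which is Griffiths positive; near $p$, $h=h_\Lambda$, whose curvature at $p$ is given by \eqref{geo:quadratic-curvature}.

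\emph{The main obstacle} is the derivative estimate for the cutoff. A plain radial cutoff on the scale $\rho$ only gives $|\partial\bar\partial\chi||D|=O(1)$, which is not small compared with $c'$. This is why the logarithmic scale is needed: the extra factor $1/\log(1/|z|)$, together with a long transition, supplies the small constant $\epsilon$. We would verify carefully that all terms in the expansion of $h^{-1}$ and the quadratic term $h^{-1}h_{\bar j}h^{-1}h_i$ are controlled by these bounds.
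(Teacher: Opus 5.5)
Your proposal is correct and follows essentially the paper's argument. Both use the convex interpolation $h=h_{FS}+\chi(h_\Lambda-h_{FS})$, a uniform Griffiths lower bound along the segment $h_s$ on a small ball, and a cutoff on a logarithmic radial scale with a long transition, whose derivative errors are absorbed using $D=O(|z|^2)$ and $dD=O(|z|)$. The paper's version is leaner in two respects. It uses the single-logarithm profile $\theta\bigl(1+L^{-1}\log(|z|^2/\rho^2)\bigr)$, so the $\log\log$ is unnecessary. It also bounds only $-h_{u\bar u}$ from below, using $hR^h(u,\bar u)=-h_{u\bar u}+h_u^*h^{-1}h_u\geq-h_{u\bar u}$, which makes your planned check of the quadratic first-order terms superfluous. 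One small slip: the Fubini--Study term at $0$ corresponds to $A\mapsto\frac{1}{2\pi}\operatorname{tr}(A)I_q$, not to $\operatorname{Id}$, which only changes constants.
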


\begin{proof}
Fix a neighborhood $U$ of $p$. Strict positivity and compactness of the
product of unit spheres give a constant $a>0$ such that
\[
 \Lambda(uu^*)\geq a\|u\|^2I_q
 \qquad(u\in\C^n).
\]
On the standard affine chart, write $s\coloneqq\|z\|^2$ and let
\[
 h_0(z)\coloneqq\frac{I_q}{1+s},\qquad
 h_1(z)\coloneqq h_\Lambda(z),\qquad
 h_t\coloneqq(1-t)h_0+th_1\quad(0\leq t\leq1).
\]
For a constant tangent vector $u$, subscripts $u,\bar u$ denote
directional derivatives. At the origin,
\[
 h_t(0)=I_q,\qquad
 -(h_t)_{u\bar u}(0)
 =(1-t)\|u\|^2I_q+2\pi t\Lambda(uu^*)
 \geq\min\{1,2\pi a\}\|u\|^2I_q.
\]
Continuity and compactness in $t$ and in the unit tangent and fiber vectors
allow us to choose a coordinate ball $B_\rho$ with
$\overline B_\rho\subset U$ and a constant $c>0$ such that, throughout
$\overline B_\rho$,
\begin{equation}\label{geo:uniform-bounds}
 h_t\geq cI_q,
 \qquad
 -(h_t)_{u\bar u}\geq c\|u\|^2I_q
 \qquad(0\leq t\leq1).
\end{equation}

Set $H\coloneqq h_1-h_0$ and $\sigma\coloneqq z^*u$. Direct differentiation
gives
\[
 H(z)=\frac{s}{1+s}I_q-2\pi\Lambda(zz^*),
 \qquad
 H_u(z)=\frac{\sigma}{(1+s)^2}I_q-2\pi\Lambda(uz^*).
\]
Every linear map between finite-dimensional normed spaces is bounded.
Hence, for a constant $C>0$ independent of $z$ and $u$,
\[
 \|\Lambda(zz^*)\|_{\mathrm{op}}\leq Cs,
 \qquad
 \|\Lambda(uz^*)\|_{\mathrm{op}}\leq C\sqrt{s}\,\|u\|.
\]
Together with $|\sigma|\leq\sqrt{s}\,\|u\|$, these estimates give, for some
constant $C'>0$,
\[
 \begin{aligned}
 \|H(z)\|_{\mathrm{op}}
 &\leq\frac{s}{1+s}+2\pi\|\Lambda(zz^*)\|_{\mathrm{op}}
 \leq C's,\\
 \|H_u(z)\|_{\mathrm{op}}
 &\leq\frac{|\sigma|}{(1+s)^2}+2\pi\|\Lambda(uz^*)\|_{\mathrm{op}}
 \leq C'\sqrt{s}\,\|u\|
 \qquad(z\in\overline B_\rho).
 \end{aligned}
\]
Choose a smooth function $\theta:\mathbb R\to[0,1]$ which equals one on
$(-\infty,0]$ and zero on $[1,\infty)$. Given $L\geq1$, write
\[
 \chi_L(z)\coloneqq
 \begin{cases}
  \displaystyle\theta\left(1+\frac1L\log\frac{s}{\rho^2}\right),&z\neq0,\\[1.2ex]
  1,&z=0.
 \end{cases}
\]
It is smooth, equals one when $\|z\|\leq\rho e^{-L/2}$, and vanishes
when $\|z\|\geq\rho$. At $z\neq0$, one has
\[
 (\chi_L)_u=\frac{\theta'(\tau)\sigma}{Ls},
 \qquad
 (\chi_L)_{u\bar u}
 =\frac{\theta''(\tau)|\sigma|^2}{L^2s^2}
  +\frac{\theta'(\tau)}L
   \left(\frac{\|u\|^2}{s}-\frac{|\sigma|^2}{s^2}\right),
\]
where $\tau\coloneqq1+L^{-1}\log(s/\rho^2)$.
Since $|\sigma|^2\leq s\|u\|^2$, there is a constant $C''>0$,
depending only on $\theta$ and independent of $L,z,u$, such that
\[
 |(\chi_L)_u|\leq\frac{C''}{L\sqrt{s}}\|u\|,
 \qquad
 |(\chi_L)_{u\bar u}|\leq\frac{C''}{Ls}\|u\|^2.
\]

On $B_\rho$, take
\[
 h\coloneqq h_0+\chi_LH=(1-\chi_L)h_0+\chi_Lh_1.
\]
The first inequality in \eqref{geo:uniform-bounds} shows that $h$ is positive definite.
Differentiating $h$ gives
\[
 -h_{u\bar u}=-(1-\chi_L)(h_0)_{u\bar u}-\chi_L(h_1)_{u\bar u}-(\chi_L)_{u\bar u}H-(\chi_L)_uH_{\bar u}-(\chi_L)_{\bar u}H_u.
\]
Since $H_{\bar u}=H_u^*$, the sum of the last three terms is a Hermitian matrix
whose operator norm is at most $3C'C''L^{-1}\|u\|^2$. Choosing $L$ so that
$3C'C''/L<c/2$, the second
inequality in \eqref{geo:uniform-bounds} gives
\[
 -h_{u\bar u}\geq\frac c2\|u\|^2I_q.
\]
Since $\chi_L\equiv1$ on a neighborhood of $0$, one has $h=h_1$ there.
Hence the inequality also holds at $z=0$. Equation
\eqref{geo:curvature-coefficients} then gives
\[
 hR^h(u,\bar u)
 =-h_{u\bar u}+h_u^*h^{-1}h_u
 \geq\frac c2\|u\|^2I_q,
\]
so $h$ is Griffiths positive on $B_\rho$.

Because $\chi_L=0$ for $\|z\|\geq\rho$, $h$ extends smoothly to a global
Griffiths-positive metric that agrees with the Fubini--Study direct sum
metric outside $U$. Near $p$, $h$ coincides with $h_1=h_\Lambda$.
In particular, $h(p)=I_q$, $dh(p)=0$, and
$h_{i\bar j}(p)=-2\pi\Lambda(E_{ij})$, so
\eqref{geo:curvature-coefficients} gives
\[
 \Omega^h|_p
 =-\frac{i}{2\pi}\sum_{i,j=1}^n
   h_{i\bar j}(p)\,dz_i\wedge d\bar z_j\big|_p
 =i\sum_{i,j=1}^n
   \Lambda(E_{ij})\,dz_i\wedge d\bar z_j\big|_p.
\]
\end{proof}

\begin{proof}[Proof of Theorem~\ref{cor:ext-geometric}]
Fix $r\geq9$, choose $0<\eta<\eta_0(r)$ as in
Proposition~\ref{geo:strict-map}, and set
$\Lambda_r\coloneqq\Lambda_{r,\eta}$. Then $\Lambda_r$ is strictly
positive and $F_r(\Lambda_r)<0$.
Proposition~\ref{geo:projective-realization} gives a smooth
Griffiths-positive metric $h_r$ on
$E_r=\mathcal O_{\mathbb P^r}(1)^{\oplus r}$ whose normalized curvature
at $p=[1:0:\cdots:0]$ is
\[
 \Omega^{h_r}|_p
 =i\sum_{i,j=1}^r\Lambda_r(E_{ij})\,dz_i\wedge d\bar z_j\big|_p.
\]
Taking the determinant of this curvature matrix, as in
\eqref{geo:quadratic-chern}, gives
\[
 c_r(E_r,h_r)|_p=F_r(\Lambda_r)dV_r|_p<0.
\]
By continuity, the top Chern form is negative on a neighborhood of $p$.
\end{proof}

\section*{Acknowledgment}

The author is grateful to his advisor, Professor Song-Yan Xie, for his
guidance and continued support.

\section*{AI Use Disclosure}

The central mathematical insights and proof strategy are due to the
author. In particular, condition~(iv) in the first-jet characterization
of Griffiths positivity \cite[Theorem~1.1(iv)]{DuXieFirstJet2026} led
the author to investigate the problem through measures and expectations
and to direct AI tools in exploring this approach. AI tools also assisted
with the search for and computation of
concrete examples and with revision of the manuscript. The
author independently verified all mathematical statements and takes full
responsibility for the content of the paper.

\end{document}